\newtheorem{assumption}{Assumption}
\newtheorem{remark}{Remark}
\newtheorem{theorem}{Theorem}
\newtheorem{lemma}{Lemma}
  \let\NAT@parse\undefined
\title{\LARGE \bf Distributed Optimization Method Based On Optimal Control}
\author{ Ziyuan Guo, Yue Sun,  Yeming Xu, Liping Zhang, and Huanshui Zhang, Senior Member, IEEE
\thanks{This work was supported by  the Original Exploratory Program Project of National Natural Science Foundation of China (62450004),  the Joint Funds of the National Natural Science Foundation of China (U23A20325),  and the Major Basic Research of Natural Science Foundation of Shandong Province (ZR2021ZD14). (Corresponding author: Huanshui Zhang).
}
\thanks{Ziyuan Guo, Yeming Xu and Liping Zhang are with the College
	of Electrical Engineering and Automation, Shandong University of Science
	and Technology, Qingdao, 266590, China (e-mail: skdgzy@sdust.edu.cn; ymxu2022@163.com; lpzhang1020@sdust.edu.cn).
	      }
\thanks{Yue Sun is  with the School of Control Science and
	      	Engineering, Shandong University, Jinan, Shandong 250061, China (email: sunyue9603@163.com).
	      }
\thanks{Huanshui Zhang is with the College of Electrical Engineering and
	Automation, Shandong University of Science and Technology, Qingdao
	266590, China, and also with the School of Control Science and
	Engineering, Shandong University, Jinan, Shandong 250061, China (email: hszhang@sdu.edu.cn).
}
}
\begin{document}

\maketitle
\thispagestyle{empty}
\pagestyle{empty}

\begin{abstract}
In this paper, a novel distributed optimization
framework has been proposed. The key idea is to convert
optimization problems into optimal control problems where
the objective of each agent is to design the current control
input minimizing the original objective function of itself and
updated size for the future time instant. Compared with the
existing distributed optimization problem for optimizing a sum
of convex objective functions corresponding to multiple agents, we
present a distributed optimization algorithm for multi-agents system  based on the results from the
maximum principle. Moreover, the convergence and superlinear
convergence rate are also analyzed stringently. 

\end{abstract}
\section{INTRODUCTION}
Optimization problem which involves maximizing or minimizing the objective function is ubiquitous in science and engineering \cite{Belegundu2000}.
In the traditional centralized optimization control, it requires a centralized manner to collect and deal with the information received from all other agents.
With increasing penetrations of the networked control systems, the division of labor and cooperation of multi-agent systems plays an increasingly important role in daily life.
Meanwhile, distributed optimization problems arise accordingly
in which several autonomous agents collectively try to achieve a global objective
and compared with the centralized optimization, distributed algorithms also have the potential to respect privacy of data, measurements, cost functions, and constraints, which becomes increasingly important in practical applications such as intelligent traffic system \cite{Eini2019}, electric power systems \cite{Molzahn2017}, formation control\cite{Olshevsky2010}, resource allocation \cite{Xue2006} and so on.

Distributed optimization has gained growing renewed interest and there are various distributed algorithms proposed in the literature.
The dual decomposition, which is the early class of distributed optimization algorithms presented in \cite{Wan2009}, typically relied on choosing the step size to ensure convergence.
Due to the inexpensive computation cost of (sub)gradients, the gradient descent algorithms is also a popular tool to deal with the distributed optimization.
\cite{Nedic2009} considered a unconstrained distributed computation model for optimizing a sum of convex objective functions corresponding to multiple agents
where each agent performed a consensus step and then a descent
step along the local (sub)gradient direction of its own convex objective function.
%
For the constrained distributed optimization problem,
\cite{Nedic2010} focused on the cooperative control problems where the values of agents are constrained to lie in closed convex sets and developed distributed optimization algorithms for problems.
To accelerate the convergence of (sub)gradient method, an effective distribution algorithm called Newton method is taken into consideration \cite{Islamov2021, Wei2013a, Wei2013b}, which has a superlinear rate in the convergence rate.
In particular, \cite{Zhang2022} proposed a distributed adaptive Newton algorithm with a global quadratic convergence rate where each node of a peer-to-peer network minimizes a finite sum of objective functions by communicating with its neighboring nodes.
The procedure proposed in \cite{Varagnolo2016} let agents distributedly compute and sequentially
update an approximated Newton-Raphson direction by means of suitable average consensus ratios.
However, as point out in \cite{Zhang2024}, although subgradient and Newton methods are favorable, powerful and widely used in distributed optimization algorithms, the choice of step size and the singularity of Hessian matrix are crucial to the algorithms.

Motivated by \cite{Zhang2024}, a novel distributed optimization algorithm is proposed from the  viewpoint of the optimal control problem. To be specific, there exists control input of each agent who is the updated size of each iteration. The objective of each agent is to design current control input to minimize the objective function of itself and updated size for the future time instant.
Through a simple transformation, the sum of the original objective function of each agent is constructed.
Based on the maximum principle, the distributed optimization method is derived compared with the existing distributed optimization problem for optimizing a sum of convex objective functions.
To show the superiority of the algorithm, the superlinear convergence rate is proved.


The outline of this paper is given below. Compared with the
distributed optimization problem, the optimal control problem
is derived in Section II. The distributed optimization algorithm
based on the maximum principle is proposed in Section
III. The rate of the convergence is analysed in Section IV.
Numerical examples are given in Section V. Conclusion is
arranged in Section VI.

{\bf Notation}: Throughout the paper,  $A^T$ stands for the transposition of
matrix $A$. $R^n$ denotes the $n$-dimensional real vector space. $I$ is a unit matrix with appropriate dimension. For a symmetric matrix $M$, $M > 0(\geq 0)$ means
that $M$ is a positive definite (positive semi-definite) matrix. $\rho(M)$ represents the spectral radius of $M$.  $\|\cdot\|$ denotes 2-norm of vectors and $\|\cdot\|_m$ is the induced norm of matrices. $[a, b]$ represents all integers from integer $a$ to integer $b$.

\section{PROBLEM FORMULATION}
\subsection{Graph theory}
The communication graph is denoted by a directed graph $\mathcal{G=(V,E,A)}$, where $\mathcal{V}=\{1,2, \ldots ,n\}$ is a set of vertices(nodes), $n$ is the number of agents satisfying $n \geq 2$, $\mathcal{E} \subseteq \mathcal{V} \times \mathcal{V}$ is the set of edges, and the
weighted matrix $\mathcal{A} = (a_{ij})_{n\times n}$ is a non-negative matrix
for adjacency weights of edges, $a_{ij} \neq 0$ if and only if $(i, j) \in \mathcal{E}$. The graph $\mathcal{G}$ is said to be balanced if the sum of the interaction weights from agent $j$ to agent $i$ are equal, i.e., $\sum\limits_{j=1}^{n}a_{ij}=\sum\limits_{j=1}^{n}a_{ji}, \forall i\in \mathcal{V}$. Moreover, $\mathcal{N}_i = \{j \in \mathcal{V}|{(i, j)} \in \mathcal{E}\}$ is used to represent the neighbor set of agent $i$. For the topology $\mathcal{G = (V, E, A)}$, a path of length $r$ from node $i_1$ to node $i_r+1$ is a sequence of $r + 1$ distinct
nodes ${\{i_1\ldots , i_r+1\}}$ such that $(i_q, i_q+1) \in \mathcal{E}$ for $q = 1, \ldots ,r$.
If there exists a path between any two nodes, then $\mathcal{G}$ is said to be strongly connected.
Here we make the following assumptions for the communication graph.
\begin{assumption}\label{assum1}
The directed graph $\mathcal{G}$ is balanced and is strongly connected.	
\end{assumption}

{Noting that } in the distributed coordination control, conditions in Assumption 1 play important roles in ensuring {multi-agent systems (MAS)} to reach consensus {\cite{Nedic2009} \cite{blondel2005convergence}}.
\subsection{Distributed optimization}

Consider an MAS consisting of $n$ agents {with optimization problem}, labeled by set $\mathcal{V} =\{1, \dots , n\}$ where agents communicate  the local state information with their neighbors via directed graph
$\mathcal{G}$. Only  agent $i \in \mathcal{V} $ knows the function $f_{i}:{\mathbb{R}^n} \to \mathbb{R}${, which is  twice continuously differentiable, possibly non-convex.  The traditional unconstrained optimization problem is given by}
\begin{equation}
		\mathop {\rm minimize}\limits_{x \in {\mathbb{R}^n}} \{f(x)\overset{\text{def}}{=}\frac{1}{n}\sum\limits_{i=1}^{n}f_{i}(x)\}. \label{1}
\end{equation}

Different from the traditional distributed optimization method,
we transform the task of finding solutions to problem (\ref{1}) into  updating of the state sequence within an optimal control problem.
To be specific, consider the following discrete-time linear time-invariant system for each  agent
\begin{equation}
	{x_{i}(k+1)} = x_{i}(k) + u_{i}(k), \label{3}
\end{equation}
where {$x_i(k)\in R^n$} is  state with the initial value $x(0)$, {$u_i(k)\in R^n$} is the  control input which needs to be further specified later. The $i$ represents the $i$th agent, $i \in {\{1,2,\dots n\}}$.

Correspondingly, the cost function of each agent satisfies
\begin{eqnarray}\label{02}
  J_i &\hspace{-0.8em}=&\hspace{-0.8em}\sum\limits^{N}_{k=0}\Big(\sum\limits_{j\in \mathcal{N}_i}e^T_{ij}(k)Q_ie_{ij}(k)+u^T_i(k)R_iu_i(k)+f_i(x_i(k))\Big)\nonumber\\
  &\hspace{-0.8em}&\hspace{-0.8em}+\sum\limits_{j\in \mathcal{N}_i}e^T_{ij}(N+1)H_ie_{ij}(N+1)+f_i(x_i(N+1)),
\end{eqnarray}
with $e_{ij}(k)=x_i(k)-x_j(k)$
where $Q_i\geq0$, $H_i\geq0$ and $R_i>0$.

\begin{remark}
Compared with the traditional method, we convert the optimization problem into an optimal control problem,
where the objective of each agent is to design the current control input minimizing the sum of the original objective function and updated size for the future time instant.
On the one hand, the problem studied in this paper includes the traditional results with multi-agents system, if we let $f_i(x_i(k))=f_i(x_i(N+1))=0$ while the system (\ref{1}) can be modified as $x_i(k+1)=Ax_i(k)+B_iu_i(k)$.
On the other hand, in the existing optimization algorithms to handle the problem (\ref{1}), there is a typical distributed first-order gradient descent algorithm \cite{ruder2016overview}:
\begin{equation}
{x_{i}(k+1)} = \sum\limits_{j=1}^{n}a_{ij}x_{j}(k) - \eta(k)\nabla f_{i}({x_i}(k)),\label{2}
\end{equation}
where  $\eta(k)$ is the step size.
Noting that in convex optimization, it has been proven that using distributed algorithm (\ref{2}), {the global optimal solution for problem (\ref{1}) can be obtained}. However, distributed algorithm (\ref{2}) has limitations in terms of selecting the step size. The step size $\eta(k)$ in distributed algorithm (\ref{2}) should theoretically satisfy the following two conditions:
\begin{equation}
lim_{k \to \infty} \eta(k)=0,\label{78}
\end{equation}
\begin{equation}
\sum\limits_{k=1}^{\infty}\eta(k)=\infty.\label{77}
\end{equation}
In practical applications, the common practice for selecting the $\eta$ is to start with a small constant value. Alternatively, even if a time-varying step size $\eta(k)$ is chosen to satisfy the two conditions (\ref{78})-(\ref{77}), the results obtained from distributed algorithm (\ref{2}) are evidently influenced by the $\eta(k)$.
The distributed algorithm (\ref{2}) has limitations in terms of selecting the step size and
the results obtained from distributed algorithm (\ref{2}) are evidently influenced by the $\eta(k)$.
\end{remark}

{ Based on the discussion above, we are in position to derive the distributed optimization algorithm from the viewpoint of the optimal control.}
{ For convenience of future discussion, some symbols are denoted below.}
\begin{eqnarray*}
x(k)&\hspace{-0.8em}=&\hspace{-0.8em}[x^{T}_1(k),...,x^{T}_n(k)]^T, \quad u(k)=[u^{T}_1(k),...,u^{T}_n(k)]^T,\\
Q&\hspace{-0.8em}=&\hspace{-0.8em}diag\{Q_1,...,Q_n\},\quad R=diag\{R_1,...,R_n\},\\
H&\hspace{-0.8em}=&\hspace{-0.8em}diag\{H_1,...,H_n\}, \quad F({x(k)})=\sum\limits_{i = 1}^nf_i({x_{i}(k)}).
\end{eqnarray*}
%
Let $\delta_i(k)$ be the  errors set of the $i$th agent, then $e(k)=[\delta^T_1(k),...,\delta^T_n(k)]$. In this way, we have
\begin{eqnarray*}
e(k+1)=e(k)+\sum\limits_{i = 1}^nB_iu_i(k)=e(k)+Bu(k),
\end{eqnarray*}
where $B_i$ is the column vector composed by $0, 1, -1$, satisfying $B=[B_1,...,B_n]$ and $B^TB=L$ is a Laplacian matrix.
To this end,  (\ref{3}) and (\ref{02}) can be written with  expanded forms as
\begin{eqnarray}\label{globalsystem}
x(k+1)=x(k)+u(k),
\end{eqnarray}
and
\begin{equation}
		\begin{array}{l}
		\vspace{0.15cm}{\rm{ }}\mathop J_N=\sum\limits_{k = 0}^N (\sum\limits_{i = 1}^n(\sum\limits_{j \in N_{i}}e^{T}_{ij}(k)Q_{i}e_{ij}(k)+ u_i(k)^\mathrm {T} R_{i}u_i(k)\\
		\vspace{0.15cm}\quad\quad +f_{i}({x_{i}(k)}))) + \sum\limits_{i = 1}^n(\sum\limits_{j \in N_{i}}e^{T}_{ij}(N+1)H_{i}e_{ij}(N+1)\\
		\quad\quad+f_i({x_{i}(N+1)}))\\
		\vspace{0.15cm}\quad\ =\sum\limits_{k = 0}^N (e^{T}(k)Qe(k)+ u(k)^\mathrm {T} Ru(k)) +F({x(k)})\\
		\vspace{0.15cm}\quad\quad + e^{T}(N+1)He(N+1)
		+F({x(N+1)}),
\end{array}\label{4}
\end{equation}
respectively. The optimal control problem to be solved in this section is given.

\emph{Problem:}
Find the optimal control $u(k)$  minimizing the long-term cost $J_N$  and subject to (\ref{globalsystem}).

\begin{remark}
In our recent work \cite{Zhang2024}, we implemented a centralized optimization algorithm, {which causes us to design the system and cost function (\ref{globalsystem})-(\ref{4}).}  Naturally, we considered the possibility of designing  a distributed algorithm {based on the centralized optimization algorithm from the maximum principle}. It should be noted that if $F({x(k)})=\sum\limits_{i = 1}^nf_i({x_{i}(k)})=0$, (\ref{4}) can be reduced to a cost function of consensus problem based on optimal control  in our recent work \cite{zhang2023distributed}.
\end{remark}
\section{Distributed Optimization Method Using Optimal Control}
In this section, we use optimal control theory to solve the Problem. This inspires us to develop an
optimization algorithm which can be implemented into a distributed manner. Additionally, a variant of the
algorithm is also proposed to balance the number of iterations and communications.
\subsection{Algorithm Development}
Following from \cite{Zhang2012}, applying Pontryagins maximum principle to the system (\ref{globalsystem}) with the cost function (\ref{4}), the following costate equation and equilibrium condition are obtained:
\begin{equation}
	0= Ru(k) + \lambda(k), \label{ph}
\end{equation}
\begin{equation}
	{\lambda(k-1)} = B^TQe(k) + \nabla f(k)+\lambda(k), \label{costate}
\end{equation}
with the terminal value ${\lambda(N)} = B^THe(N+1) + \nabla f(N+1)$, where $\nabla f(k)=[\nabla f_1(x_1(k),...,\nabla f_n(x_n(k)))]^T$.

To derive the analytical solution from the FBDEs (\ref{globalsystem}), (\ref{ph})-
(\ref{costate}), define the Riccati equation
\begin{equation}
	{P(k)} = Q+P(k+1)-P(k+1)B\Gamma^{-1}(k)B^TP(k+1), \label{riccati}
\end{equation}
with the terminal value ${P(N+1)} = H$, where $	{\Gamma(k)} = R+B^TP(k+1)B$.

The analytical expression to the controller is given.
\begin{lemma}\label{Lemma1}
	If the Riccati equation (\ref{riccati}) admits solution such that ${\Gamma(k)}$ is invertible, then the control satisfies
	\begin{equation}
		{u(k)} = -\Gamma^{-1}(k)[B^TP(k+1)e(k)+\sum\limits_{l = k + 1}^{N + 1}M(l)\nabla f(l)], \label{uk}
	\end{equation}
    where {$M(l)=M(l-1)R(R+B^TP(i)B)^{-1} $ for $l> k+1$ and $M(l)=I$ for $l=k+1$}. Moreover, the costate equation is derived as
	\begin{equation}
		{\lambda(k-1)} = B^TP(k)e(k) + \sum\limits_{l = k}^{N + 1}M(l)\nabla f(l). \label{costate1}
	\end{equation}
\end{lemma}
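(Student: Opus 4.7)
The plan is to establish \eqref{uk} and \eqref{costate1} simultaneously by backward induction on $k$, using \eqref{costate1} as the induction invariant for the costate and producing \eqref{uk} at each step by combining that invariant with the equilibrium condition \eqref{ph}. The Riccati equation \eqref{riccati} will play its usual ``completion of squares'' role, absorbing the quadratic $e(k)$-terms that appear when $e(k+1)$ is eliminated in favor of $e(k)$ and $u(k)$.

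For the base case $k=N$, I would feed the terminal costate $\lambda(N)=B^THe(N+1)+\nabla f(N+1)$, the state equation $e(N+1)=e(N)+Bu(N)$, and the relation $u(N)=-R^{-1}\lambda(N)$ from \eqref{ph} into a single rearrangement to obtain $\bigl(R+B^TP(N+1)B\bigr)u(N)=-B^TP(N+1)e(N)-\nabla f(N+1)$, which is \eqref{uk} at $k=N$ under the convention $M(N+1)=I$. For the inductive step, suppose the hypothesis $\lambda(k)=B^TP(k+1)e(k+1)+\sum_{l=k+1}^{N+1}M(l)\nabla f(l)$ holds. Substituting $e(k+1)=e(k)+Bu(k)$ and $\lambda(k)=-Ru(k)$ and collecting the $u(k)$ terms yields $\Gamma(k)u(k)=-\bigl[B^TP(k+1)e(k)+\sum_{l=k+1}^{N+1}M(l)\nabla f(l)\bigr]$, giving \eqref{uk} by invertibility of $\Gamma(k)$.

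To advance the invariant to index $k-1$, I would use \eqref{costate} to write $\lambda(k-1)=B^TQe(k)+\nabla f(k)+\lambda(k)$, substitute the induction hypothesis, and replace $e(k+1)$ by $e(k)-B\Gamma^{-1}(k)[B^TP(k+1)e(k)+\sum_{l=k+1}^{N+1}M(l)\nabla f(l)]$ via the formula just derived. The coefficient of $e(k)$ becomes $B^T[Q+P(k+1)-P(k+1)B\Gamma^{-1}(k)B^TP(k+1)]$, which collapses to $B^TP(k)$ by \eqref{riccati}, while the coefficient multiplying the tail sum is $I-B^TP(k+1)B\Gamma^{-1}(k)=R\Gamma^{-1}(k)$. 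The remaining bookkeeping is to rebase the summation from $k+1$ down to $k$: defining $M_{\mathrm{new}}(k)=I$ and $M_{\mathrm{new}}(l)=R\Gamma^{-1}(k)M(l)$ for $l\ge k+1$ produces the desired form \eqref{costate1}, and one checks that $M_{\mathrm{new}}$ inherits the one-step product recursion $M_{\mathrm{new}}(l)=M_{\mathrm{new}}(l-1)R(R+B^TP(l)B)^{-1}$ from the old $M$. The Riccati algebra itself is routine; the main obstacle is precisely this re-indexing step, i.e.\ verifying that lowering the base of the tail sum by one preserves the product form of the coefficients, so that the invariant \eqref{costate1} carries over verbatim at $k-1$.
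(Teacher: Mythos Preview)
Your approach is essentially identical to the paper's: backward induction on $k$, using \eqref{costate1} as the inductive hypothesis for the costate, deriving \eqref{uk} at each step from \eqref{ph}, and then pushing the costate identity from $k$ to $k-1$ via \eqref{costate} together with the Riccati equation \eqref{riccati}. The paper carries out exactly the base case $k=N$ and the step from $s+1$ to $s$ that you outline, with the same substitutions $e(k+1)=e(k)+Bu(k)$ and the same collapse of the $e(k)$-coefficient to $B^TP(k)$ via \eqref{riccati}; if anything, your write-up is more explicit about the re-indexing of the $M(l)$ coefficients, which the paper leaves implicit.
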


\begin{proof}
For $k = N$, adding the terminal value $\lambda(N)$ into (\ref{ph}), there holds
\begin{equation}
	\begin{split}
		0 &= Ru(N)+B^TH[e(N)+Bu(N)]+\nabla f(N+1) \\
		&=[R+B^THB]u(N)+B^THe(k)+\nabla f(N+1),\label{8}
	\end{split}
\end{equation}
i.e., $u(N)$ satisfies (\ref{uk}) for $k = N$. Accordingly, substituting
(\ref{8}) into (\ref{costate}), the costate $\lambda(N-1)$ is calculated as

\begin{equation*}
	\begin{split}
		&\lambda(N-1) \\
=& B^TQe(N)\!+\!B^TH[e(N)\!+\!Bu(N)]\!+\!\nabla f(N\!+\!1)\!+\!\nabla f(N)\\
		=&B^T[Q+H]e(N)-B^THB\Gamma^{-1}(k)[B^TP(k+1)e(k)\\
		& +\nabla f(N+1)]+\nabla f(N+1)+\nabla f(N)\\
		=&B^TP(N)e(N)+M(N)\nabla f(N+1)+\nabla f(N),
	\end{split}
\end{equation*}
which is exactly (\ref{costate1}) for $k = N$, where $P(N)$ satisfying (\ref{riccati})
has been used in the last equality.

Assume that (\ref{costate1}) is established for $k\geq s+1$, we will show it also holds for $k=s$. For $k=s$, adding $\lambda(s)$ into (\ref{ph}), one has

\begin{equation}
0=Ru(s)+B^TP(s+1)[e(s)+Bu(s)]+ \sum\limits_{l = s + 1}^{N + 1}M(l)\nabla f(l),\label{13}
\end{equation}
i.e., $u(s)$ is such that
	\begin{equation}
		{u(s)} = -\Gamma^{-1}(s)[B^TP(s+1)e(s)+\sum\limits_{l = s + 1}^{N + 1}M(l)\nabla f(l)]. \label{us}
	  \end{equation}
	
	  Adding it into (\ref{costate}) for $k = s$, it derives
	  \begin{equation*}
	  	\begin{split}
	  		\lambda(s-1) &= B^TQe(s)+B^TP(s+1)[e(s)+Bu(s)]\\
	  		&+\sum\limits_{l = s + 1}^{N + 1}M(l)\nabla f(l)+\nabla f(s) \\
	  		&=B^TP(s)e(s)+\sum\limits_{l = s}^{N + 1}M(l)\nabla f(l),
	  	\end{split}
	  \end{equation*}
	  which is exactly (\ref{costate1}). This completes the proof.
\end{proof}

\begin{remark}
It should be noted that the analytical solution $u(k)$ obtained in Lemma 1 contains the future information, i.e., it depends on  $\{\nabla f(i), i \in [k + 1,N + 1]\}$, which is difficult to realize in practical life, thus a modified algorithm is presented.
\end{remark}

Before proposing an algorithm based on optimal control obtained in Lemma \ref{Lemma1}, we first show an interesting phenomenon: the controller (\ref{uk}) derived from optimal control theory utilizes the average value of the derivative of each agent's local function, referred to as the ``average gradient'' hereafter. {That is to say, the matrix $M(l)$ has the property of taking the average value, which is proved below. It should be noted that d}istributed optimization algorithms that employ the average gradient have been explored in {\cite{qu2017harnessing}}. Our derived results theoretically demonstrate the rationality and correctness of adopting average gradient.
{ The property of $M(l)$ defined in Lemma \ref{Lemma1} is given below.}
\begin{lemma}
{$M(l)$ defined in Lemma \ref{Lemma1} satisfies}
$lim_{i \to \infty}M(l)=\frac{1}{n}\boldsymbol {1}\boldsymbol {1^T}$.
\end{lemma}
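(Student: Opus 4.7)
My plan is to unroll the recursion for $M(l)$ into an explicit matrix product and then identify the limit as a spectral projector generated by the asymptotic value of $R\Gamma^{-1}(\cdot)$. Starting from $M(k+1)=I$ and the update $M(l)=M(l-1)R\Gamma^{-1}(l-1)$ (using $\Gamma(l-1)=R+B^{T}P(l)B$), one immediately obtains
\[
M(l)=\prod_{j=k+1}^{l-1} R\,\Gamma^{-1}(j),
\]
so the behavior of $M(l)$ as $l$ grows is governed by the asymptotic factor $R\Gamma^{*-1}$, where $\Gamma^{*}:=R+B^{T}P^{*}B$ and $P^{*}$ is the stabilizing solution of the algebraic Riccati equation obtained by taking the steady-state limit of (\ref{riccati}).

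Next I would isolate the algebraic fact that forces $\tfrac{1}{n}\mathbf{1}\mathbf{1}^{T}$ to appear. Because $B^{T}B=L$ is a graph Laplacian and $\mathbf{1}^{T}L\mathbf{1}=0$, we get $B\mathbf{1}=0$; hence $B^{T}P^{*}B\,\mathbf{1}=0$ and $\mathbf{1}^{T}B^{T}P^{*}B=0$, which yields $\Gamma^{*}\mathbf{1}=R\mathbf{1}$ and $\mathbf{1}^{T}\Gamma^{*}=\mathbf{1}^{T}R$. This produces both a right and a left eigenvector at eigenvalue one:
\[
R\Gamma^{*-1}\mathbf{1}=\mathbf{1},\qquad \mathbf{1}^{T}R\Gamma^{*-1}=\mathbf{1}^{T}.
\]
A short Rayleigh-quotient argument, using the strong connectivity of $\mathcal{G}$ (so that $\ker B=\mathrm{span}(\mathbf{1})$ and $B^{T}P^{*}B$ is positive definite on $\mathbf{1}^{\perp}$), shows every other eigenvalue of $R\Gamma^{*-1}$ has modulus strictly less than one. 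Therefore $(R\Gamma^{*-1})^{m}$ converges to the spectral projector onto $\mathrm{span}(\mathbf{1})$, namely $\mathbf{1}\mathbf{1}^{T}/(\mathbf{1}^{T}\mathbf{1})=\tfrac{1}{n}\mathbf{1}\mathbf{1}^{T}$.

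Finally, since $M(l)$ is actually a product of the non-identical factors $R\Gamma^{-1}(j)$ rather than powers of a single matrix, a tail-domination step is needed: convergence $P(j)\to P^{*}$ implies $R\Gamma^{-1}(j)\to R\Gamma^{*-1}$, so only finitely many initial factors differ significantly from the limit factor, and the contractivity on $\mathbf{1}^{\perp}$ shown above lets one pass to the limit.

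The main obstacle I anticipate is the spectral analysis of $R\Gamma^{*-1}$ away from $\mathrm{span}(\mathbf{1})$: since $R$ and $\Gamma^{*}$ need not commute, $R\Gamma^{*-1}$ is in general non-symmetric and one cannot quote a symmetric Rayleigh quotient directly. The cleanest route is to study the generalized pencil $R\lambda-\Gamma^{*}$, noting that $\Gamma^{*}-R=B^{T}P^{*}B\succeq 0$ with kernel exactly $\mathrm{span}(\mathbf{1})$; this pencil argument also underlies the existence and uniqueness of the stabilizing $P^{*}$, which is the other step that requires care (one needs a mild stabilizability/detectability hypothesis on $(B,Q^{1/2})$ together with $R\succ 0$ to guarantee that the backward iterates $P(k)$ in (\ref{riccati}) converge to $P^{*}$).
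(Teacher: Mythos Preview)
Your plan is sound and the core idea matches the paper's: both arguments reduce to showing that powers of the steady-state factor $R(R+B^{T}P^{*}B)^{-1}$ converge to $\tfrac{1}{n}\mathbf{1}\mathbf{1}^{T}$. The paper, however, takes a shorter route. It simply adopts the stabilizing solution $P^{*}$ in $M(l)$ from the outset (bypassing your tail-domination step entirely), then rewrites $R(R+B^{T}P^{*}B)^{-1}=(I+\bar L)^{-1}$ with $\bar L:=B^{T}P^{*}BR^{-1}$, asserts that $\bar L$ is a Laplacian, and invokes the consensus results of Olfati-Saber and Murray to conclude that the iterates of $w(k+1)=(I+\bar L)^{-1}w(k)$ converge to the average $\tfrac{1}{n}\mathbf{1}\mathbf{1}^{T}w(0)$. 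Your route is more self-contained (a direct spectral/pencil analysis instead of an appeal to the consensus literature) and more honest about the non-identical factors, at the cost of extra work.

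One small slip to fix: from $\Gamma^{*}\mathbf{1}=R\mathbf{1}$ you obtain $\Gamma^{*-1}R\mathbf{1}=\mathbf{1}$, so $\mathbf{1}$ is a right eigenvector of $\Gamma^{*-1}R$, not of $R\Gamma^{*-1}$. The right eigenvector of $R\Gamma^{*-1}$ at eigenvalue $1$ is $R\mathbf{1}$, and the limiting projector becomes $(R\mathbf{1})\mathbf{1}^{T}/(\mathbf{1}^{T}R\mathbf{1})$, which equals $\tfrac{1}{n}\mathbf{1}\mathbf{1}^{T}$ only when $R\mathbf{1}\propto\mathbf{1}$ (e.g.\ all $R_{i}$ equal). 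The paper's claim that $\bar L=B^{T}P^{*}BR^{-1}$ is a Laplacian carries exactly the same hidden requirement ($\bar L\mathbf{1}=0$ needs $R^{-1}\mathbf{1}\in\ker B=\mathrm{span}(\mathbf{1})$), so this is a shared tacit hypothesis rather than a flaw specific to your argument.
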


\begin{proof}
If the Riccati equation (\ref{riccati}) admits  a stable solution $P$ {when $k\rightarrow \infty$}, then we adopt this stable solution in $M(l)$. In order to prove the convergence of $M(l)$,  the following system is constructed firstly
\begin{equation}
	\begin{split}
	{w(k+1)} &= R(R+B^TPB)^{-1}w(k)\\
	         &= (R(R+B^TPB)^{-1})^kw(0), \label{wk}
	\end{split}
\end{equation}
where $w(k) \in {\mathbb{R}^n}$. {In this case, the key point is to prove} $lim_{k \to \infty} (R(R+B^TPB)^{-1})^k=\frac{1}{n}\boldsymbol {1}\boldsymbol {1^T}$. Utilizing the properties of the Riccati equation's solution and $B^TB=L$, rewrite the above  (\ref{wk}) with finesse
\begin{equation}
		\begin{split}
	{w(k+1)} &= (I+\bar L)^{-1}w(k)\\
	         &= w(k)-((I+\bar L)^{-1}\bar L)w(k), \label{alwk}
		\end{split}
\end{equation}
 where $\bar L=B^TPBR^{-1}$ is a Laplacian matrix.

 According to {\cite{olfati2004consensus}}, since the eigenvalues of matrix $\bar L$ are greater than or equal to zero, the eigenvalues of $(I+\bar L)$ are greater than or equal to $1$. Therefore, the eigenvalues of the inverse of $(I+\bar L)$ lie between $0$ and $1$, indicating that the above system  (\ref{wk}) is stable.

 When $k \rightarrow \infty$, the stable state $w$ of the system should satisfy $-((I+\bar L)^{-1}\bar L)w=0$, which simplifies to $-\bar Lw=0$. Referring to conclusion {\cite{olfati2004consensus}}, the solution of $-\bar Lw=0$ is $w^*=\frac{1}{n}\boldsymbol {1}\boldsymbol {1^T}w(0)$, so we have $lim_{k \to \infty} (R(R+B^TPB)^{-1})^k=\frac{1}{n}\boldsymbol {1}\boldsymbol {1^T}$, i.e., $lim_{i \to \infty}M(l)=\frac{1}{n}\boldsymbol {1}\boldsymbol {1^T}$. This completes the proof.
\end{proof}	

Lemma 2 indicates that (\ref{uk}) utilizes the average gradient. For the sake of simplifying notation, we will denote the average gradient as $g(l)$, i.e., $g(l)=M(l)\nabla f(l)$.
\subsection{Distributed Optimization Algorithm } \label{555}
In this subsection, we discuss the distributed optimization algorithm derived based on Lemma 1, which includes a centralized algorithm and a decentralized algorithm.
\begin{lemma}
	Under the condition of Lemma 1 and based on the optimal control obtained from it, there exists an iteration algorithm
	\begin{equation}
		\begin{split}
			{x(k+1)} &= x(k)+d_k(x(k))\\
			{d_l(x(k))}&= -(\Gamma_P+h(k))^{-1}[g(k)-\Gamma_Pd_{l-1}(x(k))], \\
			d_0(x(k))&=-(\Gamma_P+h(k))^{-1}(g(k)+B^TPe(k)).\label{dk}
		\end{split}
	\end{equation}
	where $l=1,\dots,k$ means the required number of cycles, $g(k)=(\frac{1}{n}\boldsymbol {1}\boldsymbol {1^T})\nabla f(k)$ is average gradient, $h(k)=diag\{h_1(k),...,h_n(k)\}$, $h_i(k)=\frac{1}{n}\sum\limits_{i = 1}^{n}\nabla^2 f_i(x_i(k))$, $\Gamma_P$ represents the stable solution $P$ of (\ref{riccati}) used for $\Gamma(k)$.
\end{lemma}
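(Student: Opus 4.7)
The plan is to derive (\ref{dk}) by replacing the non-causal sum of future average gradients that appears in (\ref{uk}) with a first-order Taylor expansion around the current state $x(k)$, and then to recast the resulting self-consistent equation as a linear fixed-point iteration in $d_l$.

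First, I would specialize Lemma \ref{Lemma1} to the steady-state regime of the Riccati equation (\ref{riccati}): replacing $P(k+1)$ by the stable solution $P$ gives $\Gamma(k) = \Gamma_P$, and by Lemma 2 we may replace $M(l)\nabla f(l)$ by $g(l) = \tfrac{1}{n}\mathbf{1}\mathbf{1}^T \nabla f(l)$. Equation (\ref{uk}) then collapses to $u(k) = -\Gamma_P^{-1}\bigl[B^T P e(k) + \sum_{l=k+1}^{N+1} g(l)\bigr]$, in which only the tail sum is non-causal. The goal is to rewrite this tail using information available at time $k$, namely $g(k)$ and $h(k)$.

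Second, I would apply the first-order expansion $g(l) \approx g(k) + h(k)\bigl(x(l)-x(k)\bigr)$ together with the stationary ansatz $u(j) \equiv d$ for $j \geq k$. Retaining only the leading future term converts the displayed formula into $\Gamma_P d + h(k) d + g(k) + B^T P e(k) = 0$, which solves to the base case $d_0(x(k))$ in (\ref{dk}). Then I would iteratively improve the approximation: rather than truncating the tail at zero, I would model the cumulative effect of one additional level of future gradients by $\Gamma_P d_{l-1}$, i.e., by the previous iterate preconditioned by $\Gamma_P$. Rearranging gives $(\Gamma_P + h(k)) d_l = -g(k) + \Gamma_P d_{l-1}$, which is precisely the recursion in (\ref{dk}).

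The main obstacle is the bookkeeping needed to absorb successively deeper future-gradient Taylor corrections into the single preconditioner $(\Gamma_P + h(k))^{-1}$, and to confirm that the residual fits the claimed linear recursion with $\Gamma_P d_{l-1}$ acting as a one-step memory. Particular care is required to avoid double counting of the Hessian correction when passing from the $l-1$ to the $l$ truncation level, and to verify that the consensus term $B^T P e(k)$ embedded in $d_0$ propagates correctly through the iteration even though it does not appear explicitly in the $l \geq 1$ formula.
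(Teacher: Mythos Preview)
Your high-level plan—causalize the future gradients in (\ref{uk}) by a Taylor expansion at $x(k)$ and then read off a linear recursion in $d_l$—is the right idea, and it is also all the paper actually claims: its own proof of this lemma contains no derivation, only a pointer to \cite{Zhang2024}. So there is no detailed argument in the paper to compare against; the question is whether your mechanics would actually produce (\ref{dk}).

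There is a genuine gap in the step you rely on. The ``stationary ansatz'' $u(j)\equiv d$ for $j\ge k$ does not give $\Gamma_P d+h(k)d+g(k)+B^TPe(k)=0$. Under that ansatz $x(l)-x(k)=(l-k)d$, so $\sum_{l=k+1}^{N+1}g(l)\approx (N{+}1{-}k)\,g(k)+h(k)d\sum_{l=k+1}^{N+1}(l-k)$, which does not reduce to a single copy of $g(k)+h(k)d$. Similarly, your passage from level $l-1$ to level $l$ (``model the cumulative effect of one additional level by $\Gamma_P d_{l-1}$'') is asserted, not derived; as written it does not explain where the particular preconditioner $\Gamma_P$ on $d_{l-1}$ comes from.

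The device that makes the derivation go through is a one-step shift of the sum, not a stationary ansatz. From the steady-state form $\Gamma_P u(k)=-B^TPe(k)-\sum_{l\ge k+1}g(l)$ and its time-shift, $\sum_{l\ge k+2}g(l)=-\Gamma_P u(k+1)-B^TPe(k+1)$, one gets
\[
\Gamma_P u(k)=-B^TPe(k)-g(k+1)+\Gamma_P u(k+1)+B^TPe(k+1).
\]
Now apply the single-step Taylor approximation $g(k+1)\approx g(k)+h(k)u(k)$ and, crucially, \emph{freeze} the consensus error across the inner loop, $e(k+1)\approx e(k)$ (no state update occurs while iterating in $l$). The $B^TPe$ terms cancel and you obtain $(\Gamma_P+h(k))u(k)=-g(k)+\Gamma_P u(k+1)$. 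Substituting the previous iterate $d_{l-1}$ for the unknown $u(k+1)$ and solving for $u(k)=:d_l$ gives exactly the recursion in (\ref{dk}); the base case $d_0$ arises by truncating the future sum to its first term (dropping $\sum_{l\ge k+2}g(l)$ altogether), which is why $B^TPe(k)$ survives there and nowhere else. This also resolves the concern you flag at the end: the consensus term propagates correctly precisely because $e$ is frozen during the inner iteration, not because of any hidden bookkeeping.
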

\begin{proof}
	For controller (\ref{uk}), noting that it exhibits non-causality because the future average gradient is used.  Fortunately, the problem of utilizing the future information has been previously addressed in our research  referring to { \cite{Zhang2024}} for the details. In accordance with the methodology presented in { \cite{Zhang2024}}, controller \eqref{uk} will transform into $d_k$ as depicted in \eqref{dk}.
\end{proof}

\begin{remark}
{ The connection between algorithm \eqref{dk} and our previous work presented in \cite{Zhang2024} will be discussed}. On the one hand, if each agent does not have a local function to optimize, {i.e., $f_i(x_i(k))=f_i(x_i(N+1))=0$}, it is evident that the terms $g(k)$ and $h(k)$ in \eqref{dk} will not exist. {That is to  say,} \eqref{dk} transforms into the following form:
	\begin{equation}
	\begin{split}
		&{x(k+1)} = x(k)+d_k\\
		&d_k=-(\Gamma_P)^{-1}B^TPe(k).\label{condk}
	\end{split}
\end{equation}
In this case, $d_k$ corresponds to the form of the controller $u_k$ from Lemma 1  in {\cite{zhang2023distributed}} when the matrix $A$ is the identity matrix. On the other hand, if we only consider a  centralized optimization problem, or if the states during the iteration process of each agent are same, then $e(k)$ and the related $P$ will not exist.  Moreover,  \eqref{dk} degenerates into the following form:
	\begin{equation}
	\begin{split}
		{x(k+1)} &= x(k)+d_k(x(k))\\
		d_l(x(k))&= -(R+h(k))^{-1}[g(k)-Rd_{l-1}(x(k))], \\
		d_0(x(k))&=-(R+h(k))^{-1}g(k).\label{ocp}
	\end{split}
\end{equation}
where $l=1,\dots,k$ means the required number of cycles. In this case, \eqref{ocp} corresponds to Algorithm I in the previous work { \cite{Zhang2024}}, where $g(k)$ and $h(k)$ correspond to the (average) gradient and Hessian matrix, respectively.
\end{remark}

Based on optimal control theory, we have derived a distributed optimization algorithm \eqref{dk} with a central computing node (in computer science, it is called as $parameter \  server$ \cite{soori2020dave} and we use this expression in Algorithm 1).  The algorithm presented in Algorithm 1 is called the Distributed Optimal Control Method with a Central Computing Node (DOCMC).  Now, we summarize the DOCMC algorithm from the perspective of each agent $i$.
\begin{algorithm}[H]
	\renewcommand{\algorithmicrequire}{\textbf{Input:}}
	\renewcommand{\algorithmicensure}{\textbf{Output:}}
	\caption{DOCMC}
	\label{alg1}
	\begin{algorithmic}[1]
		\STATE Initialization: Each agent checks whether it can communicate with the server, and sets $x_i(0) \in {\mathbb{R}^p}$
		\FOR {$k=0,1,...$}
				\FOR {each agent $i=1,...,n$}
		\STATE Compute local gradient $\nabla f_{i}({x_i}(k))$ and local hessian ${\nabla^2 f_{i}({x_i}(k))}$. Calculate the state error $e_{ij}(k)$ between itself and its neighbor.
		\STATE Send $\nabla f_{i}({x_i}(k))$,  $\nabla^2 f_{i}({x_i}(k))$ and $e_{ij}(k)$ to server.	
		       \ENDFOR	
	    \STATE For the server, compute $g(k)$, $h(k)$ and $d_0(x(k))=-(\Gamma_P+h(k))^{-1}(g(k)+B^TPe(k))$.
	      \FOR {$l=0,1,...,k-1$}
	      \item
	        ${d_{l+1}(x(k))= -(\Gamma_P+h(k))^{-1}[g(k)-\Gamma_Pd_{l}(x(k))]}$
	     \ENDFOR
	     \STATE Broadcast ${d_k(x_i(k))}$ to $i$-th agent, where $d_k(x_i(k))$ is the components corresponding to the $i-th$ agent in $d_k(x(k))$.
	     \STATE Each agent update $x_i(k+1)=x_i(k)+d_k(x_i(k))$
		\ENDFOR	
	\end{algorithmic}
\end{algorithm}

Utilizing the state error $e_{ij}$ with neighbors can significantly enhance the computational stability of the server.
In fact, if each agent adopts the same state, then \eqref{ocp} will be utilized in DOCMC. However, when the states of each agent cannot be guaranteed to be consistent due to
communication processes or the influence of individual agents,
relying solely on \eqref{ocp} may not be sufficient to achieve the optimization goals.
The existence of $e_{ij}$ ensures that each agent can achieve the optimization goals even when their states are not consistent (whether due to initially different states or deviations during the optimization process), as the algorithm incorporates an optimal consensus feedback control form based on $e_{ij}$.

Next, we will focus on describing the distributed  optimization algorithm. Firstly, reconsidering the DOCMC algorithm, the existence of $(\Gamma_P+h(k))^{-1}$ is a primary reason why the algorithm requires a server.  Motivated by our recent work {\cite{xu2024distributed}}, which replaced the inverse matrix with an adjustable scalar $\eta >0$, in the DOCMC algorithm,  $(\Gamma_P+h(k))^{-1}$ {  will be replaced} by $\eta >0$ in { the following discussion}. Due to the relationship   $(\Gamma_P+h(k))^{-1}\Gamma_P=I-(\Gamma_P+h(k))^{-1}h(k)$ {  based on (\ref{dk})}, we  propose the following algorithm
	\begin{equation}
	\begin{split}
		{x(k+1)} &= x(k)+\overline{d}_k(x(k))\\
		\overline{d}_l(x(k))&= -\eta g(k)+(I-\eta h(k))\overline{d}_{l-1}(x(k)), \\
		\overline{d}_0(x(k))&=-\eta(g(k)+B^TPe(k)),\label{ddk}
	\end{split}
\end{equation}
 where $l=1,\dots,k$ means the required number of cycles, the second-order information $h(k)$ is used in algorithm \eqref{ddk} on account of the optimization framework of optimal control. We refer to the above algorithm as the Distributed Optimization Algorithm based on Optimal Control (DOAOC).

{ 
\begin{remark}
{Now the connection between algorithm \eqref{ddk} and our previous work in {\cite{wang2024superlinear}} is discussed}. If we only consider a  centralized optimization problem, or if the states during the iteration process of each agent are the same, then $e(k)$ and the related $P$ will not exist.  {  In this case,} \eqref{ddk} { degrades} into the following form:
	\begin{equation}
	\begin{split}
		{x(k+1)} &= x(k)+\overline{d}_k(x(k))\\
    \overline{d}_l(x(k))&= -\eta g(k)+(I-\eta h(k))\overline{d}_{l-1}(x(k)), \\
    \overline{d}_0(x(k))&=-\eta g(k).\label{ocpl}
	\end{split}
\end{equation}
{  which is exactly Algorithm II in the previous work \cite{wang2024superlinear}.}
\end{remark}}

Now, {a distributed optimization algorithm for each agent $i$ to handling problem {\eqref{1}} from the DOAOC algorithm is given, referring}
to Algorithm 2 for details.
{  In the proposed distributed algorithm, as described in Steps 3 and 4,  each agent only communicates with its neighbors.}
Step 5 initializes the loop by computing the initial $\overline{d}^0_{i}(k)$. Different from {\cite{xu2024distributed}},  Step 7 only requires each agent to calculate $\overline{d}^{t+1}_{i}(k)$ independently.
\begin{algorithm}[H]
	\renewcommand{\algorithmicrequire}{\textbf{Input:}}
	\renewcommand{\algorithmicensure}{\textbf{Output:}}
	\caption{DOAOC}
	\label{alg1}
	\begin{algorithmic}[1]
		\STATE Initialization: Each agent $i$ requires $\eta$ and sets $x_i(0) \in {\mathbb{R}^p}$
		\FOR {$k=0,1,...$}
		\STATE Take any average consensus protocol to get $h_i(k)$ and $g_i(k)$.
		\STATE Each agent $i$ obtains $e_{ij}(k)$ by communicating with its neighbors $j \in { \mathcal{N}_i}$ and {  combining with $e_{ij}(k)$, agent $i$ records}  $\delta^T_i(k)$.
		\STATE Each agent $i$ calculates $\overline{d}^0_{i}(k)=-\eta(g_i(k)+B_i^TP_i\delta^T_i(k))$.
		\FOR {$l=0,1,...,k-1$}
		\STATE Each agent $i$ calculates
		
		$\overline{d}^{l+1}_{i}(k)= \overline{d}^l_{i}(k)-\eta g_i(k)-\eta h_i(k)\overline{d}^l_{i}(k)$.
		\ENDFOR
		\STATE Each agent $i$ updates $x_i(k+1)=x_i(k)+\overline{d}^{k}_{i}(k)$
		\ENDFOR	
	\end{algorithmic}
\end{algorithm}

\subsection{Dissussion of the DOCMC and DOAOC Algorithm} \label{123}
In this subsection, we discuss the compelling characteristics of DOCMC and DOAOC and some simplification techniques in practical use{ , which}
are summarized as follows:
\begin{itemize}	
	\item
Many distributed optimization methods commonly used in machine learning, including frameworks like TensorFlow, employ distributed algorithms with a central computing node. When considering a star network (master-slave network) structure \cite{soori2020dave} in the DOCMC algorithm (where $e_{ij}$ does not occur as each agent independently exchanges information with the server), simplification can be achieved using (21), where $g(k)$ represents the average derivative and $h(k)$ directly represents $\frac{1}{n}\sum\limits_{i = 1}^{n}\nabla^2 f_i(x_i(k))$.
	\item
To obtain the average gradient, Step 3 in Algorithm 2 is necessary. In practical applications, after evaluating the performance of the agents, a suitable consensus protocol can be chosen: if agents have ample memory, memory-intensive consensus protocols like the DSF algorithm \cite{zhang2022distributed} can be employed; if agents have limited memory, a finite-time average consensus protocol \cite{charalambous2018laplacian} can be used; and in cases where agent performance is poor, traditional consensus protocols \cite{Nedic2009} \cite{olfati2004consensus} can still be applied. $h_i(k)$ in DOAOC algorithm  can be obtained not only through the above consensus protocol but also can be approximated by first-order derivative difference. In fact, in our recent work \cite{wang2024superlinear}, Algorithm III utilizes a first-order derivative difference method to obtain the Hessian matrix.
\end{itemize}

\section{CONVERGENCE ANALYSIS} \label{222}
In this section, we conduct a convergence analysis of the proposed algorithms. The superlinear convergence rate demonstrates the superiority of our optimal control-based algorithms. The assumption provided in this subsection is instrumental for drawing our conclusion.
\begin{assumption}\label{assm2}
	The local objective functions $f_i, i=1,...,n$ are twice continuously differentiable, and there exist constants $0<m_1\le m_2<\infty$ such that for any $x \in {\mathbb{R}^n}$, $m_1I\preceq \nabla^2f_i(x)\preceq m_2I$
\end{assumption}

\begin{remark}
Assumption \ref{assm2} is standard in the convergence analysis , see {\cite{boyd2004convex}}. The lower bound { $m_1$} of $\nabla^2f_i(x)$ implies that the local objective function $f_i$ is strongly convex, and thus the objective function { $f_i$} has a unique global minimum point. The upper bound $m_2$ of $\nabla^2f_i(x)$ means that the local objective function $f_i$ is smooth.
\end{remark}

Let $x^*$ be the minimum point of $F(x)$, where $g^*$ represents $g(x^*)$ and $h^*$ represents $h(x^*)$, now we can proceed with the convergence analysis of our algorithms.

For DOCMC algorithm, we can compactly write the update iteration in \eqref{dk} as
\begin{equation}
	\begin{split}
		{x(k+1)} &= x(k)-[I-((\Gamma_P+h(k))^{-1}\Gamma_P)^{k+1}]\\
		&\times h(k)^{-1}g(k)-[(\Gamma_P+h(k))^{-1}\Gamma_P]^{k+1}\\
		&\times \Gamma_P^{-1}B^TPe(k).\label{sdk}
	\end{split}
\end{equation}
\begin{lemma}
	Under the { condition} of Assumption \ref{assm2}. DOCMC algorithm  converges if {  the weighted matrix $R$} is selected appropriately.		
\end{lemma}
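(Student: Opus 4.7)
The plan is to decompose the analysis of iteration (\ref{sdk}) into three stages: a uniform contraction bound on $M(k) := (\Gamma_P+h(k))^{-1}\Gamma_P$, geometric decay of the consensus term involving $e(k)$, and a Newton-type bound on the remaining increment.

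First I would establish a uniform spectral radius bound on $M(k)$. Assumption~\ref{assm2} gives $h(k)\succeq m_1 I$, and selecting $R\succ 0$ such that (\ref{riccati}) admits a positive semi-definite stable solution $P$ makes $\Gamma_P = R + B^T P B$ positive definite. A similarity transformation by $h(k)^{1/2}$ shows that every eigenvalue of $M(k)$ has the form $\mu/(1+\mu)$, where $\mu$ ranges over the eigenvalues of $h(k)^{-1/2}\Gamma_P h(k)^{-1/2}$. Since $\mu \le \|\Gamma_P\|_m/m_1$, this yields
\begin{equation*}
\rho(M(k)) \le \sigma := \frac{\|\Gamma_P\|_m}{\|\Gamma_P\|_m + m_1} < 1,
\end{equation*}
uniformly in $k$. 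Choosing $R$ (and therefore $\Gamma_P$) with sufficiently small norm is the \emph{appropriate selection} that drives $\sigma$ well below one.

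With $\sigma<1$ in hand, the geometric factor $M(k)^{k+1}$ in (\ref{sdk}) satisfies $\|M(k)^{k+1}\|_m = O(\sigma^{k+1})$, so the consensus residual $M(k)^{k+1}\Gamma_P^{-1}B^T P e(k)$ vanishes provided $\|e(k)\|$ does not grow geometrically faster than $\sigma^{-k}$. Meanwhile $I - M(k)^{k+1}\to I$, so the update reduces asymptotically to the Newton-type iteration $x(k+1)\approx x(k) - h(k)^{-1}g(k)$. Using the mean value theorem together with $g^*=0$ (since $x^*$ minimizes $F$), I would write $g(k) = \bar h(k)(x(k)-x^*)$ for an averaged Hessian $\bar h(k)$ satisfying $m_1 I \preceq \bar h(k)\preceq m_2 I$, and combine this with the spectral bounds on $h(k)^{-1}$ to obtain an estimate of the form
\begin{equation*}
\|x(k+1)-x^*\| \le \alpha\|x(k)-x^*\| + C\sigma^{k+1}\|e(k)\|,
\end{equation*}
with $\alpha<1$ once $\sigma$ is small enough that the geometric correction does not disturb the Newton term. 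A discrete Gronwall / comparison argument then delivers $x(k)\to x^*$ and $e(k)\to 0$.

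The main obstacle is Step~1: because $\Gamma_P$ and $h(k)$ generally do not commute, straightforward eigenvalue arithmetic on the product $(\Gamma_P+h(k))^{-1}\Gamma_P$ fails, and the similarity/quadratic-form argument above is essential for extracting the uniform contraction factor $\sigma<1$. A secondary technical difficulty is ensuring a~priori boundedness of the iterates so that $\|e(k)\|$ stays within the $\sigma^{-k}$ growth budget; one natural way to handle this is to construct a Lyapunov function combining $F(x(k))-F(x^*)$ with a consensus penalty of the form $e(k)^T P e(k)$, whose monotone decrease is guaranteed once $\sigma$ is sufficiently small---which is precisely why the statement requires appropriate selection of $R$.
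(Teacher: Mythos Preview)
Your plan differs from the paper's in its decomposition. The paper does \emph{not} bound $\|x(k+1)-x^*\|$ directly; instead it splits the error into a disagreement part $x(k)-\overline{x}(k)$ (with $\overline{x}(k)=\tfrac{1}{n}\mathbf{1}\mathbf{1}^T x(k)$) and an average part $\overline{x}(k)-x^*$, and treats them in sequence. For the first part it observes that the Newton increment $[I-M(k)^{k+1}]h(k)^{-1}g(k)$ already lies in the consensus subspace, so subtracting averages leaves only the Laplacian term, and $W=I-\epsilon\bar L$ with $\epsilon=M(k)^{k+1}\Gamma_P^{-1}$ is a Perron matrix in the sense of Olfati-Saber--Murray; this gives a one-line contraction $\|x(k+1)-\overline{x}(k+1)\|\le\sigma\|x(k)-\overline{x}(k)\|$ with $\sigma<1$, with no Gronwall bookkeeping and no a~priori boundedness issue. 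For the second part it simply notes that the averaged iterate $y(k)=\overline{x}(k)$ obeys the centralized recursion already analyzed in \cite{wang2024superlinear}, and quotes that result.

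There is a genuine gap in your Newton step. You write $g(k)=\bar h(k)(x(k)-x^*)$ with $m_1 I\preceq\bar h(k)\preceq m_2 I$, but this is impossible: $g(k)=(\tfrac{1}{n}\mathbf{1}\mathbf{1}^T)\nabla f(k)$ lies entirely in the consensus subspace, whereas $x(k)-x^*$ in general does not. Any $\bar h(k)$ realizing that identity must annihilate the disagreement component and hence cannot be bounded below by $m_1 I$. Concretely, block $j$ of $g(k)$ equals $\tfrac{1}{n}\sum_i \bar H_i(x_i(k)-x^*)$, which depends on \emph{all} agents' states, so the map $x(k)-x^*\mapsto h(k)^{-1}g(k)$ is not a perturbation of the identity on the full space and you cannot extract a contraction factor $\alpha<1$ for $\|x(k+1)-x^*\|$ from it. The paper's two-stage split is exactly what resolves this: once consensus is established, the averaged iterate lives in a space where the Newton map \emph{is} coercive, and the centralized estimate applies. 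Your Lyapunov suggestion (combining $F(x(k))-F(x^*)$ with $e(k)^T P e(k)$) would in effect rediscover this splitting, but the direct mean-value argument as written does not go through.
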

\begin{proof}
	First{ ly}, we prove that $x(k)$ converges to $\overline{x}(k)$, where $\overline{x}(k)=(\frac{1}{n}\boldsymbol {1}\boldsymbol {1^T})x(k)$.
	\begin{equation}
		\begin{split}
			&\rVert x(k+1)-\overline{x}(k+1) \rVert \\
			&=\rVert x(k)-[I-((\Gamma_P+h(k))^{-1}\Gamma_P)^{k+1}] h(k)^{-1}g(k)\\
			&-[(\Gamma_P+h(k))^{-1}\Gamma_P]^{k+1}R^{-1}B^TPe(k)-\overline{x}(k)\\
			&+[I-((\Gamma_P+h(k))^{-1}\Gamma_P)^{k+1}] h(k)^{-1}g(k) \rVert \\
			&=\rVert x(k)-[(\Gamma_P+h(k))^{-1}\Gamma_P]^{k+1}\Gamma_P^{-1}B^TPe(k)-\overline{x}(k)\rVert\\
			&=\rVert [I-((\Gamma_P+h(k))^{-1}\Gamma_P)^{k+1}\Gamma_P^{-1}\bar L]x(k)-\overline{x}(k)\rVert.\label{pfav}
		\end{split}
	\end{equation}
	Let $\epsilon= ((\Gamma_P+h(k))^{-1}\Gamma_P)^{k+1}\Gamma_P^{-1}$. 
{ Recalling} that $\Gamma_P>0$ and $
	h(k)>0$,  { it} is { obtained immediately}
from {\cite{wang2006matrix}} that every eigenvalue of $(\Gamma_P+h(k))^{-1}\Gamma_P$ is positive and less than 1. Accordingly, all eigenvalues of $((\Gamma_P+h(k))^{-1}\Gamma_P)^{k+1}$ are positive and its spectral radius is less than 1. It should be noted that $((\Gamma_P+h(k))^{-1}\Gamma_P)^{k+1}$ is a symmetric matrix{ , which} can be proved in {\cite{wang2024superlinear}}. Therefore,  $W=I-\epsilon \bar L$  satisf{ ies} the properties of $P_\epsilon$ in {\cite{olfati2004consensus}}. According to {\cite{qu2017harnessing}}, then we have
	\begin{equation}
		\begin{split}
			\rVert x(k+1)-\overline{x}(k+1) \rVert
			&= \rVert (W-\frac{1}{n}\boldsymbol {1}\boldsymbol {1^T})(x(k)-\overline{x}(k)) \rVert\\
			&\leq \sigma \rVert x(k)-\overline{x}(k) \rVert,
		\end{split}
	\end{equation}
	where $\sigma \in (0,1)$. This means that $x(k)$  converges to  $\overline{x}(k)$ as $k \rightarrow \infty$. Let $y(k)=\overline{x}(k)$, next we will show that $y(k)$ converges to $x^*$. Using $y(k)$ in  the update iteration in \eqref{sdk} , we have
	\begin{equation}
		{y(k+1)} = y(k)-[I-((\Gamma_P+h(k))^{-1}R)^{k+1}] h(k)^{-1}g(k), \label{26}
	\end{equation}
{ which} is consistent with the form of Algorithm 1 in {\cite{wang2024superlinear}}.
{ Thus, there holds}
	\begin{equation}
		{F(y(k+1))-F(x^*)} \leq c(F(y(k))-F(x^*)). \label{27}
	\end{equation}
	where $0 \leq c<1$. {  More details can refer to} the proof of Lemma 2 in {\cite{wang2024superlinear}}. This proof is completed.
\end{proof}

Now we are in a position to discuss the convergence rate of DOCMC algorithm.
\begin{theorem}\label{thmmain0}
Under the condition of Assumption \ref{assm2}, when $\{x(k)\}$ generated by DOCMC algorithm {  is convergent, then} there exists a scalar $r_1>0$ such that
	\begin{align}
		||x(k+1)-x^*||&\le r_1 \rho((\Gamma_P+h(k)))^{-1}\Gamma_P))^{k+1} \\
		&\times ||x(k)-x^*||, \label{conrate2}
	\end{align}
{  that is to say},  DOCMC algorithm is superlinearly convergent.
\end{theorem}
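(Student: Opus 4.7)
My plan is to work directly from the compact one-step recursion \eqref{sdk}, subtract $x^*$ from both sides, and collapse the difference into a single factor of $\Phi(k)^{k+1}$, where I abbreviate $\Phi(k):=(\Gamma_P+h(k))^{-1}\Gamma_P$. First I would use the optimality condition $g^*=0$ together with the fundamental theorem of calculus to write $g(k)=\widetilde{h}(k)(x(k)-x^*)$, where $\widetilde{h}(k):=\int_0^1 h(x^*+t(x(k)-x^*))\,dt$; under Assumption \ref{assm2} this averaged Hessian satisfies $m_1 I\preceq \widetilde{h}(k)\preceq m_2 I$, so $h(k)^{-1}g(k)=h(k)^{-1}\widetilde{h}(k)(x(k)-x^*)$. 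Substituting into \eqref{sdk} and telescoping,
\begin{equation*}
x(k+1)-x^* = \bigl[I-(I-\Phi(k)^{k+1})h(k)^{-1}\widetilde{h}(k)\bigr](x(k)-x^*)-\Phi(k)^{k+1}\Gamma_P^{-1}B^T P e(k).
\end{equation*}
In the asymptotic regime $h(k)^{-1}\widetilde{h}(k)\to I$, so the leading-order bracket reduces to $\Phi(k)^{k+1}$, which is the factor I want to extract.

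Next I would take norms and exploit the two structural facts already recorded: $\Phi(k)^{k+1}$ is symmetric (so $\|\Phi(k)^{k+1}\|_m=\rho(\Phi(k))^{k+1}$, cf.\ the argument cited from \cite{wang2024superlinear}), and every eigenvalue of $\Phi(k)$ lies in $(0,1)$ uniformly in $k$ because $\Gamma_P>0$ and $m_1 I\preceq h(k)\preceq m_2 I$. The consensus-error term is controlled by observing that $x^*$ lies in the consensus subspace, so that $B^T(\mathbf{1}\otimes x^*)=0$ (recalling $B^TB=L$ is the graph Laplacian), yielding $\|e(k)\|\le \|B^T\|_m\|x(k)-x^*\|$. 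Assembling these pieces produces
\begin{equation*}
\|x(k+1)-x^*\|\le \bigl(\rho(\Phi(k))^{k+1}+\varepsilon(k)\bigr)\|x(k)-x^*\|+\rho(\Phi(k))^{k+1}\|\Gamma_P^{-1}\|_m\|B^T\|_m\|P\|_m\|x(k)-x^*\|,
\end{equation*}
where $\varepsilon(k)$ gathers the $o(1)$ remainder coming from $h(k)^{-1}\widetilde{h}(k)-I$.

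Finally I would absorb $\varepsilon(k)$ into the leading factor. Since $\rho(\Phi(k))\le \rho_0<1$ uniformly (by the uniform spectral bound from Assumption \ref{assm2}) and $\varepsilon(k)\to 0$, for $k$ large one has $\varepsilon(k)\le \rho(\Phi(k))^{k+1}$; for the finitely many small $k$ one simply enlarges the constant. Setting $r_1:=1+\|\Gamma_P^{-1}\|_m\|B^T\|_m\|P\|_m+C$ for a suitable $C$ covering the initial indices yields the claimed inequality, and superlinearity follows because $\rho(\Phi(k))^{k+1}/\rho(\Phi(k))^{k}=\rho(\Phi(k))\to $ some number in $(0,1)$ while the base is raised to an increasing power, so the effective per-step contraction factor $r_1\rho(\Phi(k))^{k+1}$ tends to zero as $k\to\infty$.

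\textbf{Main obstacle.} The delicate step is rigorously justifying the replacement of $h(k)^{-1}g(k)$ by $x(k)-x^*$: one must control $\|h(k)^{-1}\widetilde{h}(k)-I\|_m$ in a way that remains negligible against $\rho(\Phi(k))^{k+1}$, and do so uniformly using only Assumption \ref{assm2}. A secondary subtlety is that $h(k)$ is the block-diagonal Hessian proxy used in DOCMC rather than the true Hessian of $F$ at $x(k)$, so the averaged-Hessian identity has to be applied componentwise through Lemma 2 (average-gradient property) before being combined; once this is handled carefully, the rest is routine norm arithmetic.
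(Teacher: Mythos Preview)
Your linearization strategy is essentially the same as the paper's, but the execution differs in two notable ways.

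First, the paper linearizes the whole update map $z_k(\cdot)$ at $x^*$ (writing $x(k+1)-x^*\approx (I+z_k'(x^*))(x(k)-x^*)$ with an explicit ``$\approx$''), computes the Jacobian there, and then observes the algebraic collapse
\[
I+z_k'(x^*)=\Phi^{*\,k+1}\bigl(I-\Gamma_P^{-1}B^TPB\bigr)=\Phi^{*\,k+1}\Gamma_P^{-1}R,
\]
using $\Gamma_P=R+B^TPB$. So the gradient term and the consensus term merge into a single clean factor; you keep them separate and bound them individually, which is correct but forfeits this simplification. Second, to extract the spectral radius the paper does not invoke symmetry of $\Phi^{k+1}$; instead it writes $\Gamma_P=C'C$, notes that $(\Gamma_P+h^*)^{-1}\Gamma_P$ is similar to the symmetric matrix $C(\Gamma_P+h^*)^{-1}C'$, and hence is diagonalizable as $\bar C^{-1}\bar\Lambda\bar C$, giving $r_1=\|\bar C^{-1}\|_m\|\bar C\|_m\|\Gamma_P^{-1}R\|_m$.

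Your integral-mean-value treatment of $g(k)$ is in principle more rigorous than the paper's bare ``$\approx$'', but the absorption step you propose---``for $k$ large one has $\varepsilon(k)\le\rho(\Phi(k))^{k+1}$''---is not justified: $\rho(\Phi(k))^{k+1}\le\rho_0^{k+1}$ decays geometrically, while under Assumption~\ref{assm2} alone (no Lipschitz Hessian) $\varepsilon(k)\to 0$ only at whatever rate $x(k)\to x^*$, which could be slower. The paper avoids confronting this by hiding the same remainder inside ``$\approx$'', so neither argument is fully rigorous on this point; your identification of it as the main obstacle is accurate.
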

\begin{proof}
{  Denote
\begin{eqnarray}
 z_k(x(k))&\hspace{-0.8em}=&\hspace{-0.8em}-[I-((\Gamma_P+h(k))^{-1}\Gamma_P)^{k+1}]h(k)^{-1}g(k)\nonumber\\
 &\hspace{-0.8em}&\hspace{-0.8em}-[(\Gamma_P+h(k))^{-1}\Gamma_P]^{k+1} \Gamma_P^{-1}B^TPe(k).\label{zkxk}
\end{eqnarray}}

{It is evident from \eqref{zkxk} that }
	\begin{align}
		z_k(x^*)=&-[I-((\Gamma_P+h^*)^{-1}\Gamma_P)^{k+1}]{h^*}^{-1}g^*\nonumber\\
		&-[(\Gamma_P+h^*)^{-1}\Gamma_P]^{k+1} \Gamma_P^{-1}B^TPe^*\nonumber\\
		=&0, \label{barzxstar}\\
		z_k'(x^*)=&-(I-((\Gamma_P+{h^*}))^{-1}\Gamma_P)^{k+1})\nonumber\\
		&-[(\Gamma_P+h^*)^{-1}\Gamma_P]^{k+1} \Gamma_P^{-1}B^TPB. \label{debarzxstar}
	\end{align}
	From \eqref{sdk}, {  one has}
	\begin{align}
		&x(k+1)-x^*\notag\\
		=&x(k)-x^*+z_k(x(k))\notag\\
		\approx&x(k)-x^*+[z_k(x^*)+z_k'(x^*)(x(k)-x^*)]\notag\\
		=&x(k)-x^*+ z_k'(x^*)(x(k)-x^*)\notag\\
		=&[((\Gamma_P+h^*))^{-1}\Gamma_P)^{k+1}\Gamma_P^{-1}R](x(k)-x^*).
	\end{align}
	{ Noting} that $\Gamma_P$ is positive definite, { there exists a matrix $C$ satisfying} $\Gamma_P=C'C$. {  To this end, it generates}
{	$(\Gamma_P+h^*))^{-1}\Gamma_P=(\Gamma_P+h^*))^{-1}C'C=C^{-1}C(\Gamma_P+h^*))^{-1}C'C$}. The second equality indicates $(\Gamma_P+h*))^{-1}\Gamma_P=\bar C^{-1} \bar \Lambda \bar C$, where $\bar \Lambda$ is a diagonal matrix. Due to
$\rho((\Gamma_P+h^*))^{-1}\Gamma_P)<1$, it {  immediately gets} $\rho(\bar \Lambda)=\rho((\Gamma_P+h^*))^{-1}\Gamma_P)<1$ and
	\begin{align}
		&||x(k+1)-x^*||\notag\\
		=&||\bar C^{-1} \bar \Lambda^{k+1} \bar C \Gamma_P^{-1}R(x(k)-x^*)||\notag\\
		\le &\rho((\Gamma_P+h^*))^{-1}\Gamma_P)^{k+1}||\bar C^{-1}||_m\ ||\bar C||_m\ \nonumber\\
		&\times ||\Gamma_P^{-1}R||_m\ ||x(k)-x^*||.
	\end{align}
{ By	letting $r_1=||\bar C^{-1}||_m||\bar C||_m||\Gamma_P^{-1}R||_m\ $, \eqref{conrate2} is obtained accordingly}. The proof is completed.
	
\end{proof}
Now,  { the superlinear convergence of DOAOC algorithm is proved.}

\begin{theorem}\label{thmmain}
Under the condition of Assumption \ref{assm2}, selecting $0<\eta < 1$ and $c=\|I-\eta h^*\|<1$,
when $\{x(k)\}$ generated by DOAOC algorithm converges, then there exists a scalar $r_2>0$ such that
	\begin{align}
		\|x(k+1)-x^*\|\le r_2 c^{k}\|x(k)-x^*\|. \label{conrate3}
	\end{align}
{  That is to say,}  DOAOC algorithm  is superlinearly convergent.
\end{theorem}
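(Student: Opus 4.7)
The plan is to follow the same template as the proof of Theorem~\ref{thmmain0}, treating the DOAOC iteration as the structural analogue of DOCMC obtained by replacing the preconditioner $(\Gamma_P+h(k))^{-1}$ with the scalar $\eta$. First I would unroll the inner recursion in \eqref{ddk}: writing $A_k=I-\eta h(k)$ and using the geometric-series identity $\eta h(k)\sum_{j=0}^{k-1}A_k^{j}=I-A_k^{k}$, the update $\overline d_k$ collapses into the closed form
\begin{equation*}
\overline d_k(x(k))=-h(k)^{-1}\bigl(I-A_k^{k}\bigr)g(k)-\eta A_k^{k}g(k)-\eta A_k^{k}B^{T}Pe(k),
\end{equation*}
which is the DOAOC analogue of \eqref{sdk}.

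Next I would set $z_k(x(k))=\overline d_k(x(k))$ so that $x(k+1)-x(k)=z_k(x(k))$, and check that $z_k(x^*)=0$: at the joint optimum $e(x^*)=0$, and the averaged gradient $g(x^*)=\tfrac{1}{n}\mathbf{1}\mathbf{1}^{T}\nabla f(x^*)$ vanishes by optimality. When linearising about $x^*$, every contribution in $z_k'$ in which $h(k)$ or $A_k^{k}$ would be differentiated drops out because it multiplies $g^*$ or $e^*$; only the Jacobians of $g$ and of $e$ survive, and these are $h^*$ and $B$ respectively (exactly as in the proof of Theorem~\ref{thmmain0}). Using that $h^*$ commutes with $I-\eta h^*$, a short simplification yields
\begin{equation*}
I+z_k'(x^*)=(I-\eta h^*)^{k}\bigl(I-\eta(h^*+B^{T}PB)\bigr).
\end{equation*}

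Assuming $x(k)\to x^*$, I would then combine this with the first-order Taylor expansion
\begin{equation*}
x(k+1)-x^*=\bigl(I+z_k'(x^*)\bigr)(x(k)-x^*)+o(\|x(k)-x^*\|),
\end{equation*}
take norms, and set $c=\|I-\eta h^*\|$ together with $r_2=\|I-\eta(h^{*}+B^{T}PB)\|$ to obtain exactly \eqref{conrate3}. Assumption~\ref{assm2} together with $0<\eta<1$ ensures that $c\in(0,1)$, so the factor $c^{k}$ drives the convergence rate to zero faster than any geometric sequence and thus produces superlinear convergence.

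The main obstacle I anticipate is justifying the linearisation rigorously and uniformly in $k$: both $h(k)$ and the power $A_k^{k}$ depend on $x(k)$, and the exponent $k$ grows with the iteration, so the Taylor remainder cannot simply be absorbed by a generic big-$O$ constant. My plan is to use the two-sided bound $m_1 I\preceq\nabla^{2}f_i\preceq m_2 I$ from Assumption~\ref{assm2} to keep $\|A_k\|$ uniformly strictly below one, hence $\|A_k^{k}\|\le c^{k}$ with a constant independent of the iterate, after which a standard second-order Taylor estimate lets the remainder be controlled by $\|x(k)-x^*\|^{2}$ times a $k$-independent constant; this parallels the absorption step used at the end of the proof of Theorem~\ref{thmmain0}.
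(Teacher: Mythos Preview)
Your proposal is correct and follows essentially the same route as the paper: the paper also unrolls \eqref{ddk} into a closed form (written there as $-[I-(I-\eta h^*)^{k+1}]{h^*}^{-1}g^*-\eta(I-\eta h^*)^{k}B^{T}Pe^*$), checks $\bar d_k(x^*)=0$, linearises to obtain $I+\bar d_k'(x^*)=(I-\eta h^*)^{k}G$ with $G=I-\eta(h^{*}+B^{T}PB)$, and sets $r_2=\|G\|_m$, exactly matching your $r_2$. The only substantive difference is that you go further than the paper in discussing the uniform-in-$k$ control of the Taylor remainder via the bound $\|A_k\|<1$ from Assumption~\ref{assm2}; the paper simply writes $\approx$ at that step.
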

\begin{proof}
{ Following from the d}irect derivation{ , it} shows
	\begin{align}
		\bar d_k(x^*)=&-[I-(I-\eta h^*)^{k+1}]{h^*}^{-1}g^*\nonumber\\
		&-((I-\eta h^*)^{k}\eta B^TP)e^*=0, \label{hatgkxstar}\\
		\bar d'_k(x^*)=&-(I-(I-\eta h^*)^{k+1})-\eta(I-\eta h^*)^kB^TPB. \label{dehatgkxstar}
	\end{align}
	Let $I-(\eta h^*+B^TPB)=G$. From \eqref{ocpl}, it follows that
	\begin{align}
		&x(k+1)-x^*\notag\\
		=&x(k)-x^*+\bar d_k(x(k))\notag\\
		\approx&x(k)-x^*+[\bar d_k(x^*)+\bar d'_k(x^*)(x(k)-x^*)]\notag\\
		=&x(k)-x^*+\bar d'_k(x^*)(x(k)-x^*)\notag\\
		=&(I+\bar d'_k(x^*))(x_k-{x^*})\notag\\
		=&((I-\eta h^*)^{k+1}-\eta(I-\eta h^*)^{k}B^TPB)(x(k)-{x^*})\notag\\
		=&[(I-\eta h^*)^kG](x(k)-{x^*}).
	\end{align}
	Since $\|I-\eta h^*\|<1$, when $\eta<2/m_2$  it follows that $ \|x(k+1)-x^*)\|\le \|I-\eta h^*\|^k\|G\|_m \|x(k)-x^*)\|$. { By letting} $r_2=\|G\|_m$, { then the superlinearly convergence of} the DOAOC algorithm  { can be obtained}. The proof is completed.
\end{proof}

\section{CONCLUSIONS}
This paper has proposed  distributed second-order optimization algorithms  with global superlinear convergence {  from the viewpoint of} optimal control theory. It successfully integrates second-order information into distributed optimization without requiring the inversion of the Hessian matrix. A connection has been { revealed} between algorithms and traditional optimization methods.  The convergence analysis of algorithms also have been achieved.

\end{document}